\pdfoutput=1
\documentclass[12pt]{article}
\usepackage{amsmath,amssymb,amsfonts}
\usepackage{tikz}
\setlength{\textheight}{45\baselineskip}
\setlength{\textwidth}{15cm}
\setlength{\voffset}{-3\baselineskip}
\setlength{\oddsidemargin}{21pt}
\setlength{\evensidemargin}{21pt}
\begin{document}
\newtheorem{theorem}{Theorem}[section]
\newtheorem{corollary}[theorem]{Corollary}
\newtheorem{lemma}[theorem]{Lemma}
\newtheorem{remark}[theorem]{Remark}
\newtheorem{example}[theorem]{Example}
\newtheorem{proposition}[theorem]{Proposition}
\newtheorem{definition}[theorem]{Definition}
\def\emptyset{\varnothing}
\def\setminus{\smallsetminus}
\def\id{{\mathrm{id}}}
\def\G{{\mathcal{G}}}
\def\H{{\mathcal{H}}}
\def\C{{\mathbb{C}}}
\def\N{{\mathbb{N}}}
\def\Q{{\mathbb{Q}}}
\def\R{{\mathbb{R}}}
\def\Z{{\mathbb{Z}}}
\def\Path{{\mathrm{Path}}}
\def\Str{{\mathrm{Str}}}
\def\st{{\mathrm{st}}}
\def\tr{{\mathrm{tr}}}
\def\a{{\alpha}}
\def\be{{\beta}}
\def\de{{\delta}}
\def\e{{\varepsilon}}
\def\si{{\sigma}}
\def\la{{\lambda}}
\def\th{{\theta}}
\def\lan{{\langle}}
\def\ran{{\rangle}}
\def\isom{{\cong}}
\newcommand{\Hom}{\mathop{\mathrm{Hom}}\nolimits}
\def\qed{{\unskip\nobreak\hfil\penalty50
\hskip2em\hbox{}\nobreak\hfil$\square$
\parfillskip=0pt \finalhyphendemerits=0\par}\medskip}
\def\proof{\trivlist \item[\hskip \labelsep{\bf Proof.\ }]}
\def\endproof{\null\hfill\qed\endtrivlist\noindent}

\title{Projector Matrix Product Operators, Anyons and
Higher Relative Commutants of Subfactors}
\author{
{\sc Yasuyuki Kawahigashi}\\
{\small Graduate School of Mathematical Sciences}\\
{\small The University of Tokyo, Komaba, Tokyo, 153-8914, Japan}\\
{\small e-mail: {\tt yasuyuki@ms.u-tokyo.ac.jp}}
\\[0,40cm]
{\small Kavli IPMU (WPI), the University of Tokyo}\\
{\small 5--1--5 Kashiwanoha, Kashiwa, 277-8583, Japan}
\\[0,40cm]
{\small Trans-scale Quantum Science Institute}\\
{\small The University of Tokyo, Bunkyo-ku, Tokyo 113-0033, Japan}
\\[0,05cm]
{\small and}
\\[0,05cm]
{\small iTHEMS Research Group, RIKEN}\\
{\small 2-1 Hirosawa, Wako, Saitama 351-0198,Japan}}
\maketitle{}
\centerline{\sl Dedicated to the memory of Vaughan Jones}

\begin{abstract}
A bi-unitary connection in subfactor theory of Jones producing
a subfactor of finite depth gives
a 4-tensor appearing in a recent work of
Bultinck-Mari\"en-Williamson-\c Sahino\u glu-Haegeman-Verstraete
on two-dimensional topological order and anyons.  In their work, they
have a special projection called a projector matrix product operator.
We prove that the range of this projection of length $k$
is naturally identified with the $k$th higher relative
commutant of the subfactor arising from the bi-unitary connection.
This gives a further connection between two-dimensional topological 
order and subfactor theory.
\end{abstract}

\section{Introduction}

The Jones theory of subfactors \cite{J} in operator algebras
has found many profound relations to other topics in 
low-dimensional topology and mathematical physics.  Here we 
present a new connection between subfactor theory and
two-dimensional topological order.

Theory of topological phases of matter has recently caught
much attention both in mathematics and physics.  
A recent paper \cite{BMWSHV} on two-dimensional topological order,
tensor networks and anyons attracted much interest of
several researchers and this topic is closely related to
theory of topological quantum computation \cite{W}.
A certain operator $P^k$ on a finite dimensional Hilbert space
called a projector matrix product operator (PMPO)
\cite[Section 3]{BMWSHV}, arising from a certain 4-tensor which means
a finite family of complex numbers indexed with 4 indices,
plays a key role and its range is important in studies
of gapped Hamiltonians and 
projected entangled pair states (PEPS) as in 
\cite[Sections 4, 5]{BMWSHV} in connection to \cite{KLPG}, \cite{LW}.
The ranges of the projector matrix product operators $P^k$
give an increasing sequence of finite dimensional Hilbert spaces
indexed by $k$.  Our mail result, Theorem \ref{MT}, states that 
this space has a natural meaning
as the $k$th higher relative commutant of the subfactor arising
from the 4-tensor in the Jones theory, through repeated basic
constructions.
(See Figure \ref{mpoOa} for a matrix product operator $O_a^k$ which
is used in the definition of
$P^k=\displaystyle \sum_a\frac{d_a}{w}O_a^k$.)

We note that flatness of a field of strings in the sense 
of \cite[Theorems 11.15]{EK} is known to play
an important role in subfactor theory and it is also a key
notion in our main result.  (This flatness  was first introduced
by Ocneanu \cite{O2}.)    Recall that the tower of higher
relative commutants is one of the most important objects
in subfactor theory.

We have already seen a connection of the work \cite{BMWSHV}
to subfactor theory and the meaning of anyons there
in \cite{K3}, \cite{K4} and we now present a more
direct and deeper connection.  See \cite{LFHSV} for
another recent connection to theory of fusion categories,
which is also closely related to subfactor theory.
See \cite{K2} for more general relations among subfactor theory,
two-dimensional conformal field theory and tensor categories.

Many researchers work on a formulations based on fusion categories
in two-dimensional topological orders.
As shown in \cite[Chapter 12]{EK}, a fusion category framework
in terms of $6j$-symbols and one based on flat bi-unitary
connections (Definition \ref{DBC}) 
are equivalent.  A possible advantage of our framework
is that the size of numerical data is much smaller for bi-unitary
connections than $6j$-symbols and this could be more suited to
actual (numerical) computations.  We also treat non-flat bi-unitary 
connections simultaneously as flat bi-unitary connections
and this generality could cover a wider class of
examples.  (See Remark \ref{R1} on this point).

Recently we have much advance in operator algebraic classification
of gapped Hamiltonians on quantum spin chains \cite{Og} and we
see some formal similarities of mathematical structures there.
It would be interesting to exploit this possible connection.
For example, the range of a projector matrix product operator
should be a space of ground states in some appropriate sense and
this viewpoint is to be further explored.

This work was partially supported by 
JST CREST program JPMJCR18T6 and
Grants-in-Aid for Scientific Research 19H00640
and 19K21832.  I thank Ziyun Xu for comments improving the
exposition.

\section{A bi-unitary connection and a subfactor of finite depth}

In subfactor theory, finite bipartite graphs play an important role
as principal graphs of subfactors.  A vertex of a principal graph
represents an irreducible object in a certain tensor category and
an edge represents the dimension of a certain Hom space in such
a category.  We treat certain 4-tensors and their 4 wires are labeled
with edges of such finite bipartite graphs (and their slight
generalizations).  That is, a choice of four edges gives a complex
number and such an object is known as a bi-unitary connection
as in Definition \ref{DBC}
in subfactor theory.  We first prepare notations and conventions 
on bi-unitary connections as in \cite{AH},
\cite[Chapter 11]{EK}, \cite{K1}, \cite{K3} ,\cite{O1}, \cite{O2}.

We have four finite unoriented connected
bipartite graphs $\G,\G',\H,\H'$.  (These graphs are allowed to
have multiple edges between a pair of vertices.  The set of vertices
of each graph is divided into two classes, even add odd ones.)
The even vertices of $\G$ and $\H$ are identified and we write $V_0$ 
for the set of these vertices.
The odd vertices of $\H$ and $\G'$ are identified and we write $V_1$ 
for the set of these vertices.
The even vertices of $\G'$ and $\H'$ are identified and we write $V_2$ 
for the set of these vertices.
The odd vertices of $\G$ and $\H'$ are identified and we write $V_3$ 
for the set of these vertices.
They are depicted as in Figure \ref{four}.
We assume that all of the four graphs have more than one edges.

\begin{figure}[h]
\begin{center}
\begin{tikzpicture}
\draw [thick, ->] (1,1)--(2,1);
\draw [thick, ->] (1,2)--(2,2);
\draw [thick, ->] (1,2)--(1,1);
\draw [thick, ->] (2,2)--(2,1);
\draw (1,1.5)node[left]{$\H$};
\draw (2,1.5)node[right]{$\H'$};
\draw (1.5,1)node[below]{$\G'$};
\draw (1.5,2)node[above]{$\G$};
\draw (1,1)node[below left]{$V_1$};
\draw (1,2)node[above left]{$V_0$};
\draw (2,1)node[below right]{$V_2$};
\draw (2,2)node[above right]{$V_3$};
\end{tikzpicture}
\caption{Four graphs}
\label{four}
\end{center}
\end{figure}

Let $\Delta_{\G,xy}$ be the number of edges of $\G$ between
$x\in V_0$ and $y\in V_3$.
Let $\Delta_{\G',xy}$ be the number of edges of $\G$ between
$x\in V_1$ and $y\in V_2$.
Let $\Delta_{\H,xy}$ be the number of edges of $\H$ between
$x\in V_0$ and $y\in V_1$.
Let $\Delta_{\H',xy}$ be the number of edges of $\H'$ between
$x\in V_3$ and $y\in V_2$.  We assume that we have the following
identities for some positive numbers $\gamma_1,\gamma_2$.
For each vertex $x$, we have a positive number $\mu_x$.
We assume the following identities.  That is, for each of
$V_0, V_1, V_2, V_3$, the vector given by $\mu_x$ gives a
Perron-Frobenius eigenvector for the adjacency matrix of one 
of the four graphs, and the numbers $\gamma_1,\gamma_2$ are
the Perron-Frobenius eigenvalues of these matrices.
Since all the four graphs have more than one edge, we have
$\gamma_1,\gamma_2 > 1$.

\begin{align*}
\sum_x \Delta_{\G,xy} \mu_x=\gamma_1 \mu_y,\quad x\in V_0, y\in V_3,\\
\sum_y \Delta_{\G,xy} \mu_y=\gamma_1 \mu_y,\quad x\in V_0, y\in V_3,\\
\sum_x \Delta_{\G',xy} \mu_x=\gamma_1 \mu_y,\quad x\in V_1, y\in V_2,\\
\sum_y \Delta_{\G',xy} \mu_y=\gamma_1 \mu_y,\quad x\in V_1, y\in V_2,\\
\sum_x \Delta_{\H,xy} \mu_x=\gamma_2 \mu_y,\quad x\in V_0, y\in V_1,\\
\sum_y \Delta_{\H,xy} \mu_y=\gamma_2 \mu_y,\quad x\in V_0, y\in V_1,\\
\sum_x \Delta_{\H',xy} \mu_x=\gamma_2 \mu_y,\quad x\in V_3, y\in V_2,\\
\sum_y \Delta_{\H',xy} \mu_y=\gamma_2 \mu_y,\quad x\in V_3, y\in V_2,
\end{align*}

For an edge $\xi$ of one of the graphs $\G,\G',\H,\H'$, we
regard it oriented, and write $s(\xi)$ and $r(\xi)$ for the
source (starting vertex) and the range (ending vertex).
(Each graph is unoriented in the sense that for each edge
$\xi$, its reversed edge $\tilde \xi$ from $r(\xi)$ to 
$s(\xi)$ is also an edge of the graph and this reversing
map is bijective on the set of edges.)
Let $\xi_0,\xi_1,\xi_2,\xi_3$ be oriented edges of 
$\G,\H,\G',\H'$, respectively. 
If we have 
$s(\xi_0)=x_0\in V_0$,
$r(\xi_0)=x_1\in V_1$,
$s(\xi_1)=x_1\in V_1$,
$r(\xi_1)=x_2\in V_2$,
$s(\xi_2)=x_3\in V_3$,
$r(\xi_2)=x_2\in V_2$,
$s(\xi_3)=x_0\in V_0$, and
$r(\xi_3)=x_3\in V_3$,
then we call a combination of $\xi_i$ 
a {\em cell}, as in Figure \ref{cell}.

\begin{figure}[h]
\begin{center}
\begin{tikzpicture}
\draw [thick, ->] (1,1)--(2,1);
\draw [thick, ->] (1,2)--(2,2);
\draw [thick, ->] (1,2)--(1,1);
\draw [thick, ->] (2,2)--(2,1);
\draw (1,1.5)node[left]{$\xi_0$};
\draw (2,1.5)node[right]{$\xi_2$};
\draw (1.5,1)node[below]{$\xi_1$};
\draw (1.5,2)node[above]{$\xi_3$};
\draw (1,1)node[below left]{$x_1$};
\draw (1,2)node[above left]{$x_0$};
\draw (2,1)node[below right]{$x_2$};
\draw (2,2)node[above right]{$x_3$};
\end{tikzpicture}
\caption{A cell}
\label{cell}
\end{center}
\end{figure}

\begin{definition}\label{DC}{\rm
Assignment of a complex number to each cell is called
a {\em connection}.  We write $W$ for this map and
write $W$ within a cell to represent this number 
as in Figure \ref{connection}. 
}\end{definition}

Note that this setting is similar to an
interaction-round-a-face (IRF) model in theory of solvable
lattice models, where we also assign a complex number to 
each cell arising from one graph (rather than a combination 
of four graphs).

\begin{figure}[h]
\begin{center}
\begin{tikzpicture}
\draw [thick, ->] (1,1)--(2,1);
\draw [thick, ->] (1,2)--(2,2);
\draw [thick, ->] (1,2)--(1,1);
\draw [thick, ->] (2,2)--(2,1);
\draw (1.5,1.5)node{$W$};
\draw (1,1.5)node[left]{$\xi_0$};
\draw (2,1.5)node[right]{$\xi_2$};
\draw (1.5,1)node[below]{$\xi_1$};
\draw (1.5,2)node[above]{$\xi_3$};
\draw (1,1)node[below left]{$x_1$};
\draw (1,2)node[above left]{$x_0$};
\draw (2,1)node[below right]{$x_2$};
\draw (2,2)node[above right]{$x_3$};
\end{tikzpicture}
\caption{A connection value}
\label{connection}
\end{center}
\end{figure}

The {\em unitarity} axiom for $W$  is given in Figure \ref{unitarity}, 
where the bar on the right cell
denotes the complex conjugate of the connection value.

\begin{figure}[h]
\begin{center}
\begin{tikzpicture}
\draw [thick, ->] (2,1)--(3,1);
\draw [thick, ->] (2,2)--(3,2);
\draw [thick, ->] (2,2)--(2,1);
\draw [thick, ->] (3,2)--(3,1);
\draw (2.5,1.5)node{$W$};
\draw (2,1.5)node[left]{$\xi_1$};
\draw (3,1.5)node[right]{$\xi_3$};
\draw (2.5,1)node[below]{$\xi_2$};
\draw (2.5,2)node[above]{$\xi_4$};
\draw (2,1)node[below left]{$z$};
\draw (2,2)node[above left]{$x$};
\draw (3,1)node[below right]{$w$};
\draw (3,2)node[above right]{$y$};
\draw [thick, ->] (4,1)--(5,1);
\draw [thick, ->] (4,2)--(5,2);
\draw [thick, ->] (4,2)--(4,1);
\draw [thick, ->] (5,2)--(5,1);
\draw [thick] (3.7,2.8)--(5.3,2.8);
\draw (4.5,1.5)node{$W$};
\draw (4,1.5)node[left]{$\xi_1$};
\draw (5,1.5)node[right]{$\xi'_3$};
\draw (4.5,1)node[below]{$\xi_2$};
\draw (4.5,2)node[above]{$\xi'_4$};
\draw (4,1)node[below left]{$z$};
\draw (4,2)node[above left]{$x$};
\draw (5,1)node[below right]{$w$};
\draw (5,2)node[above right]{$y'$};
\draw (0.5,1.5)node{$\sum_{z,\xi_1,\xi_2}$};
\draw (7.5,1.5)node
{$=\displaystyle\delta_{y,y'}\delta_{\xi_3,\xi'_3}\delta_{\xi_4,\xi'_4}$};
\end{tikzpicture}
\caption{Unitarity}
\label{unitarity}
\end{center}
\end{figure}

We define a new connection $W'$ as
in Figure \ref{renormalization1} on the four graphs
$\tilde\G,\tilde\G',\H',\H$, where $\tilde\xi$ is the reversed
edge of $\xi$ from $r(\xi)$ to $s(\xi)$ and $\tilde\G$ is 
the reversed graph of $\G$ consisting of such reversed edges
as in Figure \ref{four2}.  We call this rule of giving a new
connection {\sl Renormalization}.

\begin{figure}[h]
\begin{center}
\begin{tikzpicture}
\draw [thick, ->] (2,1)--(3,1);
\draw [thick, ->] (2,2)--(3,2);
\draw [thick, ->] (2,2)--(2,1);
\draw [thick, ->] (3,2)--(3,1);
\draw (2.5,1.5)node{$W'$};
\draw (2,1.5)node[left]{$\xi_3$};
\draw (3,1.5)node[right]{$\xi_1$};
\draw (2.5,1)node[below]{$\tilde\xi_2$};
\draw (2.5,2)node[above]{$\tilde\xi_4$};
\draw (2,1)node[below left]{$w$};
\draw (2,2)node[above left]{$y$};
\draw (3,1)node[below right]{$z$};
\draw (3,2)node[above right]{$x$};
\draw [thick, ->] (6,1)--(7,1);
\draw [thick, ->] (6,2)--(7,2);
\draw [thick, ->] (6,2)--(6,1);
\draw [thick, ->] (7,2)--(7,1);
\draw [thick] (5.7,2.8)--(7.3,2.8);
\draw (6.5,1.5)node{$W$};
\draw (6,1.5)node[left]{$\xi_1$};
\draw (7,1.5)node[right]{$\xi_3$};
\draw (6.5,1)node[below]{$\xi_2$};
\draw (6.5,2)node[above]{$\xi_4$};
\draw (6,1)node[below left]{$z$};
\draw (6,2)node[above left]{$x$};
\draw (7,1)node[below right]{$w$};
\draw (7,2)node[above right]{$y$};
\draw (4.5,1.5)node{$\displaystyle=\sqrt{\frac{\mu_x\mu_w}{\mu_y\mu_z}}$};
\end{tikzpicture}
\caption{Renormalization (1)}
\label{renormalization1}
\end{center}
\end{figure}

\begin{figure}[h]
\begin{center}
\begin{tikzpicture}
\draw [thick, ->] (1,1)--(2,1);
\draw [thick, ->] (1,2)--(2,2);
\draw [thick, ->] (1,2)--(1,1);
\draw [thick, ->] (2,2)--(2,1);
\draw (1,1.5)node[left]{$\H'$};
\draw (2,1.5)node[right]{$\H$};
\draw (1.5,1)node[below]{$\tilde\G'$};
\draw (1.5,2)node[above]{$\tilde\G$};
\draw (1,1)node[below left]{$V_2$};
\draw (1,2)node[above left]{$V_3$};
\draw (2,1)node[below right]{$V_1$};
\draw (2,2)node[above right]{$V_0$};
\end{tikzpicture}
\caption{Four graphs for $\tilde W$}
\label{four2}
\end{center}
\end{figure}

We now have the following definition of a bi-unitary connection.

\begin{definition}\label{DBC}{\rm
If unitarity holds for $W$ and $W'$, 
then we say that $W$ is a bi-unitarity connection
}\end{definition}

That is, bi-unitarity means that we have
unitarity for both the original connection $W$ and
the new connection $W'$ defined by 
Renormalization in Figure \ref{renormalization1}.
Roughly speaking, bi-unitarity means the connection
is ``doubly unitary'' for the original one and its 
reflection, but the connection
value should be adjusted for the reflection, and this 
adjustment up to normalization constants is given by
Renormalization, Figure \ref{renormalization1}.

Ocneanu and Haagerup found that
a bi-unitary connection characterizes a 
non-degenerate commuting squares of finite dimensional
$C^*$-algebras with a trace as in \cite[Section 11.2]{EK},

\begin{example}\label{dynkin}{\rm
A typical example of a bi-unitary connection is given as 
follows.  Fix one of the Dynkin diagrams $A_n, D_n, E_6, E_7, E_8$
and let $N$ be its Coxeter number.  Set all $\G,\G',\H,\H'$ to
be this bipartite graph so that $V_0$ and $V_2$ 
[resp. $V_1$ and $V_3$] give the
even [resp. odd] vertices of this graph 
and set both $\gamma_1,\gamma_2$ to be
$\displaystyle2\cos\frac{\pi}{N}$.  
We set $\varepsilon=\displaystyle\sqrt{-1}
\exp\left(\frac{\pi\sqrt{-1}}{2N}\right)$
We then have a
bi-unitary connection as in Figure \ref{dynkin2},
\cite[Figure 11.32]{EK}.

\begin{figure}[h]
\begin{center}
\begin{tikzpicture}
\draw [thick, ->] (2,1)--(3,1);
\draw [thick, ->] (2,2)--(3,2);
\draw [thick, ->] (2,2)--(2,1);
\draw [thick, ->] (3,2)--(3,1);
\draw (2.5,1.5)node{$W$};
\draw (2,1)node[below left]{$z$};
\draw (2,2)node[above left]{$x$};
\draw (3,1)node[below right]{$w$};
\draw (3,2)node[above right]{$y$};
\draw (5.5,1.5)node{$\displaystyle=\delta_{y,z}\varepsilon+
\sqrt{\frac{\mu_y\mu_z}{\mu_x\mu_w}}\delta_{x,w}\bar\varepsilon$};
\end{tikzpicture}
\caption{A bi-unitary connection on a Dynkin diagram}
\label{dynkin2}
\end{center}
\end{figure}

When the graph is $A_n$, this is related to the quantum group
$U_q(sl_2)$ with $q$ being a root of unity.
Also see \cite{P} for the corresponding IRF models.
}\end{example}

We assume this bi-unitarity for $W$ from now on.
(We do {\em not} assume flatness of $W$ in the sense
of \cite[Definition 11.16]{EK}.  If we have flatness with respect
to a vertex in $V_0$ and another in $V_2$, then this bi-unitary
connection gives a paragroup in the sense of Ocneanu 
\cite[Chapter 10]{EK}, when we would automatically have
$\G=\H$ and $\G'=\H'$.  In this sense, a bi-unitary connection
is a more general form of a paragroup.)

\begin{figure}[h]
\begin{center}
\begin{tikzpicture}
\draw [thick, ->] (2,1)--(3,1);
\draw [thick, ->] (2,2)--(3,2);
\draw [thick, ->] (2,2)--(2,1);
\draw [thick, ->] (3,2)--(3,1);
\draw (2.5,1.5)node{$\bar W$};
\draw (2,1.5)node[left]{$\tilde\xi_1$};
\draw (3,1.5)node[right]{$\tilde\xi_3$};
\draw (2.5,1)node[below]{$\xi_4$};
\draw (2.5,2)node[above]{$\xi_2$};
\draw (2,1)node[below left]{$x$};
\draw (2,2)node[above left]{$z$};
\draw (3,1)node[below right]{$y$};
\draw (3,2)node[above right]{$w$};
\draw [thick, ->] (6,1)--(7,1);
\draw [thick, ->] (6,2)--(7,2);
\draw [thick, ->] (6,2)--(6,1);
\draw [thick, ->] (7,2)--(7,1);
\draw [thick] (5.7,2.8)--(7.3,2.8);
\draw (6.5,1.5)node{$W$};
\draw (6,1.5)node[left]{$\xi_1$};
\draw (7,1.5)node[right]{$\xi_3$};
\draw (6.5,1)node[below]{$\xi_2$};
\draw (6.5,2)node[above]{$\xi_4$};
\draw (6,1)node[below left]{$z$};
\draw (6,2)node[above left]{$x$};
\draw (7,1)node[below right]{$w$};
\draw (7,2)node[above right]{$y$};
\draw (4.5,1.5)node{$\displaystyle=\sqrt{\frac{\mu_x\mu_w}{\mu_y\mu_z}}$};
\end{tikzpicture}
\caption{Renormalization (2)}
\label{renormalization2}
\end{center}
\end{figure}

\begin{figure}[h]
\begin{center}
\begin{tikzpicture}
\draw [thick, ->] (2,1)--(3,1);
\draw [thick, ->] (2,2)--(3,2);
\draw [thick, ->] (2,2)--(2,1);
\draw [thick, ->] (3,2)--(3,1);
\draw (2.5,1.5)node{$\bar W'$};
\draw (2,1.5)node[left]{$\tilde\xi_3$};
\draw (3,1.5)node[right]{$\tilde\xi_1$};
\draw (2.5,1)node[below]{$\tilde\xi_4$};
\draw (2.5,2)node[above]{$\tilde\xi_2$};
\draw (2,1)node[below left]{$y$};
\draw (2,2)node[above left]{$w$};
\draw (3,1)node[below right]{$x$};
\draw (3,2)node[above right]{$z$};
\draw [thick, ->] (5,1)--(6,1);
\draw [thick, ->] (5,2)--(6,2);
\draw [thick, ->] (5,2)--(5,1);
\draw [thick, ->] (6,2)--(6,1);
\draw (5.5,1.5)node{$W$};
\draw (5,1.5)node[left]{$\xi_1$};
\draw (6,1.5)node[right]{$\xi_3$};
\draw (5.5,1)node[below]{$\xi_2$};
\draw (5.5,2)node[above]{$\xi_4$};
\draw (5,1)node[below left]{$z$};
\draw (5,2)node[above left]{$x$};
\draw (6,1)node[below right]{$w$};
\draw (6,2)node[above right]{$y$};
\draw (4,1.5)node{$=$};
\end{tikzpicture}
\caption{Renormalization (3)}
\label{renormalization3}
\end{center}
\end{figure}

We also define new connections $\bar W$ and $\bar W'$ as in
Figures \ref{renormalization2} and \ref{renormalization3},
again on (partially) reversed graphs.
They are both bi-unitary connections automatically.

\begin{definition}{\rm
We define the value of another diagram  in the left hand side 
of Figure \ref{convention} as in
Figure \ref{convention}.
}\end{definition}

Note that we have the identity in Figure \ref{convention2} due to
Figures \ref{renormalization1} and \ref{convention}.

\begin{figure}[h]
\begin{center}
\begin{tikzpicture}
\draw [thick, ->] (3,1)--(2,1);
\draw [thick, ->] (3,2)--(2,2);
\draw [thick, ->] (2,2)--(2,1);
\draw [thick, ->] (3,2)--(3,1);
\draw (2.5,1.5)node{$W$};
\draw (2,1.5)node[left]{$\xi_3$};
\draw (3,1.5)node[right]{$\xi_1$};
\draw (2.5,1)node[below]{$\xi_2$};
\draw (2.5,2)node[above]{$\xi_4$};
\draw (2,1)node[below left]{$w$};
\draw (2,2)node[above left]{$y$};
\draw (3,1)node[below right]{$z$};
\draw (3,2)node[above right]{$x$};
\draw [thick, ->] (5,1)--(6,1);
\draw [thick, ->] (5,2)--(6,2);
\draw [thick, ->] (5,2)--(5,1);
\draw [thick, ->] (6,2)--(6,1);
\draw [thick] (4.7,2.8)--(6.3,2.8);
\draw (5.5,1.5)node{$W$};
\draw (5,1.5)node[left]{$\xi_1$};
\draw (6,1.5)node[right]{$\xi_3$};
\draw (5.5,1)node[below]{$\xi_2$};
\draw (5.5,2)node[above]{$\xi_4$};
\draw (5,1)node[below left]{$z$};
\draw (5,2)node[above left]{$x$};
\draw (6,1)node[below right]{$w$};
\draw (6,2)node[above right]{$y$};
\draw (4,1.5)node{$=$};
\end{tikzpicture}
\caption{Conjugate convention}
\label{convention}
\end{center}
\end{figure}

\begin{figure}[h]
\begin{center}
\begin{tikzpicture}
\draw [thick, ->] (3,1)--(2,1);
\draw [thick, ->] (3,2)--(2,2);
\draw [thick, ->] (2,2)--(2,1);
\draw [thick, ->] (3,2)--(3,1);
\draw (2.5,1.5)node{$W$};
\draw (2,1.5)node[left]{$\xi_1$};
\draw (3,1.5)node[right]{$\xi_3$};
\draw (2.5,1)node[below]{$\tilde\xi_2$};
\draw (2.5,2)node[above]{$\tilde\xi_4$};
\draw (2,1)node[below left]{$z$};
\draw (2,2)node[above left]{$x$};
\draw (3,1)node[below right]{$w$};
\draw (3,2)node[above right]{$y$};
\draw [thick, ->] (6,1)--(7,1);
\draw [thick, ->] (6,2)--(7,2);
\draw [thick, ->] (6,2)--(6,1);
\draw [thick, ->] (7,2)--(7,1);
\draw (6.5,1.5)node{$W$};
\draw (6,1.5)node[left]{$\xi_1$};
\draw (7,1.5)node[right]{$\xi_3$};
\draw (6.5,1)node[below]{$\xi_2$};
\draw (6.5,2)node[above]{$\xi_4$};
\draw (6,1)node[below left]{$z$};
\draw (6,2)node[above left]{$x$};
\draw (7,1)node[below right]{$w$};
\draw (7,2)node[above right]{$y$};
\draw (4.5,1.5)node{$\displaystyle=\sqrt{\frac{\mu_x\mu_w}{\mu_y\mu_z}}$};
\end{tikzpicture}
\caption{Renormalization convention}
\label{convention2}
\end{center}
\end{figure}

We fix any vertex in $V_0$ and write $*$ for this.  As in
\cite[Section 11.3]{EK}, we construct a double sequence of
finite dimensional $C^*$-algebras $\{A_{nk}\}_{n,k=0,1,\dots}$
starting from $*$
and hyperfinite II$_1$ factors $A_{\infty,k}$ and $A_{n,\infty}$,
using $W,W',\bar W,\bar W'$.
(Here our $\mu_*$ is not normalized to be $1$, so we use
$\mu_x/\mu_*$ to define a normalized trace on $A_{nk}$
as in \cite[page 554]{EK}.)
Then the we have
$[A_{\infty,1}:A_{\infty,0}]=\gamma_1^2$ and
$[A_{1,\infty}:A_{0,\infty}]=\gamma_2^2$
for the Jones index values as in 
\cite[Theorem 11.9]{EK}.  This construction is due to Ocneanu \cite{O1}.
We now assume that one of these
two subfactors has a {\em finite depth} in the sense of
\cite[Definition 9.41]{EK}.  Note that in this case, the other
subfactor also has a finite depth by a result of Sato,
\cite[Corollary 2.2]{S1}.
(This paper of Sato gave a positive solution 
to a question raised by Jones.)

Let $\tilde W$ be the (vertical) product of $W$ and $\bar W$ as in
Figure \ref{tildeW}.  That is, we multiply two connection values and
make a summation over all possible choices of $\xi_7$, like 
concatenation of tensors.
We make irreducible decomposition of powers of $\tilde W$.
As in \cite[Section 3]{AH}, this product and irreducible
decomposition correspond to the relative tensor product
and irreducible decompositions of $A_{0,\infty}$-$A_{0,\infty}$
bimodules arising from the subfactor $A_{0,\infty}\subset A_{1,\infty}$.
(These bimodules are also understood in terms of sectors as in
\cite{L}.)

\begin{figure}[h]
\begin{center}
\begin{tikzpicture}
\draw [thick, ->] (1,2.5)--(2,2.5);
\draw [thick, ->] (1,0.5)--(2,0.5);
\draw [thick, ->] (4,1)--(5,1);
\draw [thick, ->] (4,2)--(5,2);
\draw [thick, ->] (6.5,1)--(7.5,1);
\draw [thick, ->] (6.5,2)--(7.5,2);
\draw [thick, ->] (1,2.5)--(1,1.5);
\draw [thick, ->] (1,1.5)--(1,0.5);
\draw [thick, ->] (2,2.5)--(2,1.5);
\draw [thick, ->] (2,1.5)--(2,0.5);
\draw [thick, ->] (4,2)--(4,1);
\draw [thick, ->] (5,2)--(5,1);
\draw [thick, ->] (6.5,2)--(6.5,1);
\draw [thick, ->] (7.5,2)--(7.5,1);
\draw (1.5,1.5)node{$\tilde W$};
\draw (4.5,1.5)node{$W$};
\draw (7,1.5)node{$\bar W$};
\draw (1,2)node[left]{$\xi_1$};
\draw (1,1)node[left]{$\xi_2$};
\draw (1.5,0.5)node[below]{$\xi_3$};
\draw (2,1)node[right]{$\xi_4$};
\draw (2,2)node[right]{$\xi_5$};
\draw (1.5,2.5)node[above]{$\xi_6$};
\draw (4,1.5)node[left]{$\xi_1$};
\draw (4.5,1)node[below]{$\xi_7$};
\draw (5,1.5)node[right]{$\xi_5$};
\draw (4.5,2)node[above]{$\xi_6$};
\draw (6.5,1.5)node[left]{$\xi_2$};
\draw (7,1)node[below]{$\xi_3$};
\draw (7.5,1.5)node[right]{$\xi_4$};
\draw (7,2)node[above]{$\xi_7$};
\draw (5.75,1.5)node{$\times$};
\draw (2.9,1.3)node{$\displaystyle=\sum_{\xi_7}$};
\end{tikzpicture}
\caption{The product connection $\tilde W$}
\label{tildeW}
\end{center}
\end{figure}

Let $\{W_a\}_{a\in V}$ be the set of representative of
irreducible bi-unitary connections, up to equivalence,
appearing in the
irreducible decompositions of the powers of $\tilde W$.
(See \cite[Section 3]{AH} for the definition of equivalence
of connections.  This corresponds to an isomorphism of
bimodules.)  The finite depth assumption exactly
means that the set $V$ is finite.
Each $a$ corresponds to an irreducible $A_{0,\infty}$-$A_{0,\infty}$
bimodules arising from the subfactor $A_{0,\infty}\subset A_{1,\infty}$.
Each $a$ thus also corresponds to an even vertex of the 
principal graph of the subfactor $A_{0,\infty}\subset A_{1,\infty}$.
Note that the horizontal graph of each $W_a$ is always
the original graph $\G$.

Let $d_a$ be the Perron-Frobenius eigenvalue of the vertical
graph corresponding to the bi-unitary connection $W_a$.  This
is equal to the dimension of the bimodule corresponding to $W_a$
as in \cite[Section 3]{AH}.
We define $w=\sum_{a\in V} d_a^2$, which is sometimes called the
{\em global index} of the subfactor $A_{0,\infty}\subset A_{1,\infty}$.
The original Perron-Frobenius vector $(\mu_x)_{x\in V_0}$ of $\H$ 
is unique up scalar.  We now normalize this vector so that we have
$\sum_{x\in V_0} \mu_x^2=w$.  Note that the Perron-Frobenius 
vector $(\mu_x)_{x\in V_0}$ is also an eigenvector for the vertical
graph corresponding to each bi-unitary connection $W_a$.

Let $M_{xa}^y$ be the number of vertical edges 
with vertex $x\in V_0$ at the upper left corner and  $y\in V_0$
at the lower left corner
for the connection $W_a$.   Note that the
Perron-Frobenius eigenvalue property gives 
$\sum_{y\in V_0} M_{xa}^y \mu_y=d_a \mu_x$.
For $a,b,c\in V$, let $N_{ab}^c$ be the
multiplicity of $W_c$ in the irreducible decomposition of the
product $W_a W_b$.   This is also the structure constant of
relative tensor products of the corresponding bimodules 
over $A_{0,\infty}$.

We define $\bar a$ to be $b\in V$ so that
$\bar W_a$ is equivalent to $W_b$.
We have $M_{xa}^y=M_{y\bar a}^x$ by the Frobenius reciprocity,
\cite[Section 9.8]{EK}.  The $A_{0,\infty}$-$A_{0,\infty}$
bimodule corresponding to $\bar a$ is contragredient to the one
corresponding to $a$ by a result in \cite[Page 17]{AH}.

Finally, we recall the following elementary lemma about a conditional
expectation in the string algebra.
(See \cite[Definitions 11.1, 11.4]{EK} for string algebras and a
trace there.)

\begin{lemma}\label{L1}
Let $A=\C\subset B\subset C$ be an increasing sequence of string algebras
of length $0,1,2$ on a Bratteli diagram.  We write $*$ for the initial
vertex of the Bratteli diagram corresponding to $A=\C$.
We fix a faithful trace on $C$.
The conditional expectation $E$ from $C$ onto $B'\cap C$ is given as
follows.

Let $\xi_1,\xi_2$ be edges of the Bratteli diagram corresponding
to $A\subset B$, $\eta_1,\eta_2$ be edges of the one 
corresponding to $B\subset C$.  Assume $r(\xi_1)=s(\eta_1)$,
$r(\xi_2)=s(\eta_2)$, $r(\eta_1)=r(\eta_2)$.
We then have 
\[
E((\xi_1\cdot\eta_1,\xi_2\cdot\eta_2))=
\delta_{\xi_1,\xi_2} \frac{1}{K_{r(\xi_1)}}
\sum_{\xi}(\xi\cdot\eta_1,\xi\cdot\eta_2),
\]
where $K_{r(\xi_1)}$ is the number of edges from $*$
to $r(\xi_1)$ on the Bratteli diagram corresponding
to $A\subset B$.
\end{lemma}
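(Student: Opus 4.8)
The plan is to make the relative commutant $B'\cap C$ completely explicit, recognize $E$ as the orthogonal projection onto it for the inner product $\langle x,y\rangle=\tau(y^*x)$ attached to the fixed faithful trace $\tau$, and then verify the asserted formula by testing against a basis. Throughout I use the standard string-algebra conventions: a matrix unit of $C$ is a pair $(\alpha,\beta)$ of length-$2$ paths from $*$ with $r(\alpha)=r(\beta)$, multiplication is $(\alpha,\beta)(\gamma,\de)=\de_{\beta,\gamma}(\alpha,\de)$, the adjoint is $(\alpha,\beta)^*=(\beta,\alpha)$, the embedding $B\subset C$ is $(\xi,\xi')\mapsto\sum_{\eta}(\xi\cdot\eta,\xi'\cdot\eta)$ over edges $\eta$ issuing from $r(\xi)=r(\xi')$, and $\tau((\alpha,\beta))=\de_{\alpha,\beta}\,t_{r(\alpha)}$ for some positive weights $t_v$.

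First I would identify $B'\cap C$ conceptually. Since $A=\C$, the length-$2$ paths ending at a level-$2$ vertex $v$ split as $\bigoplus_u P_u\otimes E_{u,v}$, where $P_u$ is the space of edges $*\to u$ (of dimension $K_u$) and $E_{u,v}$ the space of edges $u\to v$; here $B=\bigoplus_u\mathrm{End}(P_u)$ acts on the first tensor factor only. As the $P_u$ are pairwise inequivalent irreducible $B$-modules, Schur's lemma gives $B'\cap C=\bigoplus_{u,v}\mathrm{End}(E_{u,v})$, and a system of matrix units for this algebra is exactly $f_{\eta_1,\eta_2}:=\sum_{\xi\colon r(\xi)=u}(\xi\cdot\eta_1,\xi\cdot\eta_2)$, indexed by pairs of edges with common source $u$ and common range. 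A one-line check confirms $f_{\zeta_1,\zeta_2}f_{\eta_1,\eta_2}=\de_{\zeta_2,\eta_1}f_{\zeta_1,\eta_2}$ and $f_{\eta_1,\eta_2}^*=f_{\eta_2,\eta_1}$, and that each generator of $B$ commutes with every $f_{\eta_1,\eta_2}$.

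Next, since the $\tau$-preserving conditional expectation is the orthogonal projection onto $D:=B'\cap C$, it suffices to show that the proposed value $y:=\de_{\xi_1,\xi_2}K_{r(\xi_1)}^{-1}f_{\eta_1,\eta_2}$ lies in $D$ (clear from the previous step) and that $(\xi_1\cdot\eta_1,\xi_2\cdot\eta_2)-y$ is orthogonal to $D$. As $D=\mathrm{span}\{f_{\zeta_1,\zeta_2}\}$ and $f^*_{\zeta_1,\zeta_2}=f_{\zeta_2,\zeta_1}$, this reduces to checking $\tau(f_{\zeta_1,\zeta_2}(\xi_1\cdot\eta_1,\xi_2\cdot\eta_2))=\tau(f_{\zeta_1,\zeta_2}\,y)$ for every basis element. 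Both sides are routine matrix-unit bookkeeping: the left equals $\de_{\xi_1,\xi_2}\de_{\zeta_2,\eta_1}\de_{\zeta_1,\eta_2}\,t_{r(\eta_2)}$, while on the right, using $f_{\zeta_1,\zeta_2}f_{\eta_1,\eta_2}=\de_{\zeta_2,\eta_1}f_{\zeta_1,\eta_2}$ and that $f_{\zeta_1,\eta_2}$ is a sum of $K_{s(\eta_2)}$ matrix units, one gets $\tau(f_{\zeta_1,\zeta_2}\,y)=\de_{\xi_1,\xi_2}K_{r(\xi_1)}^{-1}\de_{\zeta_2,\eta_1}\de_{\zeta_1,\eta_2}K_{s(\eta_2)}\,t_{r(\eta_2)}$; on the support the constraint $\xi_1=\xi_2$ forces $s(\eta_2)=r(\xi_1)$, so $K_{s(\eta_2)}=K_{r(\xi_1)}$ cancels the normalization and the two sides agree.

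The one step demanding genuine care is the identification of $B'\cap C$ — specifically, confirming that the $f_{\eta_1,\eta_2}$ \emph{exhaust} the relative commutant and are not merely contained in it; the Schur-lemma decomposition above settles this cleanly, but one could instead impose the commutation relations $x\,b=b\,x$ on a general $x\in C$ and read off that its coefficients must vanish unless the two outer edges coincide and must be independent of that common edge. Everything else is formal. It is worth noting that the trace weights $t_{r(\eta_2)}$ cancel between the two sides, so the formula is valid for \emph{every} faithful trace, consistent with the hypothesis that merely fixes one.
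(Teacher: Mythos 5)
Your proposal is correct and complete. There is nothing in the paper to compare it against in detail: the paper's entire proof of this lemma is the sentence ``We have this identity by a direct computation,'' and your argument supplies exactly that computation in its natural form --- identify $B'\cap C$ as $\bigoplus_{u,v}\mathrm{End}(E_{u,v})$, spanned by the matrix units $f_{\eta_1,\eta_2}=\sum_{\xi}(\xi\cdot\eta_1,\xi\cdot\eta_2)$ (Schur's lemma, using $A=\C$), characterize $E$ as the trace-preserving (orthogonal) projection onto this subalgebra, and verify the defining identity $\tau(xd)=\tau(E(x)d)$ on that basis. Your closing observation that the weights $t_{r(\eta_2)}$ cancel, so the formula is independent of the choice of faithful trace, is a genuinely useful point that the paper leaves implicit and that explains why the lemma need only ``fix'' an arbitrary faithful trace.
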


\begin{proof}
We have this identity by a direct computation.
\end{proof}

\section{A 4-tensor and a projector matrix product operator}

We define projector matrix product operators \cite[Section 3.1]{BMWSHV},
which was originally defined in terms of 4-tensors, with language
of bi-unitary connections in the previous Section.

We define a 4-tensor $a$ as in Figure \ref{tensor} and
\cite[Figure 11]{K3}.  Note that we have a horizontal
concatenation of the connections $W_a$ and $W'_a$ here,
since we have considered only
{\em symmetric} bi-unitary connections in \cite[Section 2]{K3}
while we do not assume this symmetric condition here.
(If we have $s(\xi_1)\neq s(\xi_6)$, then the value of the 4-tensor
is set to be 0.  Similarly, if the edges do not make a cell for
one of the two squares, the value of the 4-tensor is 0.)

\begin{figure}[h]
\begin{center}
\begin{tikzpicture}
\draw [thick] (2.5,1)--(2.5,1.3);
\draw [thick] (2.5,2)--(2.5,1.7);
\draw [thick] (2,1.5)--(2.3,1.5);
\draw [thick] (3,1.5)--(2.7,1.5);
\draw (2.5,1.5) circle (0.2);
\draw (2.5,1.5)node{$a$};
\draw (2,1.5)node[left]{$\xi_1$};
\draw (3,1.5)node[right]{$\xi_4$};
\draw (2.5,1)node[below]{$\xi_2\cdot\xi_3$};
\draw (2.5,2)node[above]{$\xi_6\cdot\xi_5$};
\draw [thick, ->] (6,1)--(7,1);
\draw [thick, ->] (6,2)--(7,2);
\draw [thick, ->] (6,2)--(6,1);
\draw [thick, ->] (7,2)--(7,1);
\draw [thick, ->] (7,1)--(8,1);
\draw [thick, ->] (7,2)--(8,2);
\draw [thick, ->] (8,2)--(8,1);
\draw (6.5,1.5)node{$W_a$};
\draw (7.5,1.5)node{$W'_a$};
\draw (6,1.5)node[left]{$\xi_1$};
\draw (6.5,1)node[below]{$\xi_2$};
\draw (7.5,1)node[below]{$\xi_3$};
\draw (8,1.5)node[right]{$\xi_4$};
\draw (7.5,2)node[above]{$\xi_5$};
\draw (6.5,2)node[above]{$\xi_6$};
\draw (6,1)node[below left]{$z$};
\draw (6,2)node[above left]{$x$};
\draw (8,1)node[below right]{$w$};
\draw (8,2)node[above right]{$y$};
\draw (4.5,1.5)node{$\displaystyle=\sqrt[4]{\frac{\mu_x\mu_w}{\mu_y\mu_z}}$};
\end{tikzpicture}
\caption{The $4$-tensor $a$ and the connection $W_a$}
\label{tensor}
\end{center}
\end{figure}

\begin{remark}\label{R1}{\rm
When we concatenate edges $\xi_1,\xi_2,\dots,\xi_k$ taken from
the horizontal graph of $W_a$, we impose the condition
$r(\xi_m)=s(\xi_{m+1})$ for $m=1,2,\dots,k-1$.  In the
4-tensor setting, we do not impose such a condition
for concatenation of edges, but 
this difference does not cause any problem here.
If we have $r(\xi_m)\neq s(\xi_{m+1})$, the path
$\xi_1\dots\xi_2\cdots\xi_k$ is mapped to zero by any
matrix product operator and we can ignore this path, since
we are interested in the range of a matrix product operator.
}\end{remark}

Fix a positive integer $k$. 
Let $\Path^{2k}(\G)$ be
the $\C$-vector space with a basis consisting of
paths of length $2k$ on 
$\G$ starting at an even vertex of $\G$.
We define a matrix product operator $O_{a,x}^{k,y}$ from
$\Path_{x,x}^{2k}(\G)$ to $\Path_{y,y}^{2k}(\G)$, where
$\Path_{x,x}^{2k}(\G)$ is a $\C$-linear space spanned by 
paths of length $2k$ starting from $x$ to $x$ on $\G$, as in Figure \ref{Oaxy},
where $\xi_1$ and $\xi_2$ have length $k$ each.

\begin{figure}[h]
\begin{center}
\begin{tikzpicture}
\draw [thick, ->] (4,1)--(7,1);
\draw [thick, ->] (7,1)--(10,1);
\draw [thick, ->] (4,2)--(7,2);
\draw [thick, ->] (7,2)--(10,2);
\draw [thick, ->] (4,2)--(4,1);
\draw [thick, ->] (4,2)--(4,1);
\draw [thick, ->] (5,2)--(5,1);
\draw [thick, ->] (6,2)--(6,1);
\draw [thick, ->] (7,2)--(7,1);
\draw [thick, ->] (8,2)--(8,1);
\draw [thick, ->] (9,2)--(9,1);
\draw [thick, ->] (10,2)--(10,1);
\draw (4.5,1.5)node{$W_a$};
\draw (5.5,1.5)node{$W'_a$};
\draw (6.5,1.5)node{$\cdots$};
\draw (7.5,1.5)node{$\cdots$};
\draw (8.5,1.5)node{$W'_a$};
\draw (9.5,1.5)node{$W_a$};
\draw (5.5,1)node[below]{$\eta_1$};
\draw (8.5,1)node[below]{$\eta_2$};
\draw (5.5,2)node[above]{$\xi_1$};
\draw (8.5,2)node[above]{$\xi_2$};
\draw (11.5,1.5)node{$\eta_1\cdot\eta_2$};
\draw (4,1.5)node[left]{$\zeta$};
\draw (10,1.5)node[right]{$\zeta$};
\draw (4,2)node[above left]{$x$};
\draw (4,1)node[below left]{$y$};
\draw (10,2)node[above right]{$x$};
\draw (10,1)node[below right]{$y$};
\draw (1.5,1.3)node{$O_{a,x}^{k,y}(\xi_1\cdot\xi_2)
=\displaystyle\sum_{\zeta,\eta_1,\eta_2}$};
\end{tikzpicture}
\caption{The operator $O_{a,x}^{k,y}$}
\label{Oaxy}
\end{center}
\end{figure}

We next define a matrix product operator $O_a^k$ by
\[
O_a^k (\bigoplus_x \xi_x)=\bigoplus_y \sum_x O_{a,x}^{k,y} \xi_x,
\]
where $\xi_x\in \Path_{x,x}^{2k}(\G)$.
Note that this is the same as the matrix product operator 
$O_a^k$ defined by Figure \ref{mpoOa} as in 
\cite[Section 3.2]{BMWSHV}.  We have different normalization
convention for the tensor $a$ and the connection $W_a$ as
in Figure \ref{tensor}, but these coefficients cancel out
due to the horizontal periodicity of the picture.
(Remark \ref{R1} again applies here about the domains of 
the two operators $O_a^k$.)

\begin{figure}[h]
\begin{center}
\begin{tikzpicture}
\draw [thick] (2.5,1)--(2.5,1.3);
\draw [thick] (2.5,2)--(2.5,1.7);
\draw [thick] (3.5,1)--(3.5,1.3);
\draw [thick] (3.5,2)--(3.5,1.7);
\draw [thick] (5.5,1)--(5.5,1.3);
\draw [thick] (5.5,2)--(5.5,1.7);
\draw [thick] (2,1.5)--(2.3,1.5);
\draw [thick] (2.7,1.5)--(3.3,1.5);
\draw [thick] (3.7,1.5)--(4.1,1.5);
\draw [thick] (4.9,1.5)--(5.3,1.5);
\draw [thick] (5.7,1.5)--(6,1.5);
\draw (2.5,1.5) circle (0.2);
\draw (3.5,1.5) circle (0.2);
\draw (5.5,1.5) circle (0.2);
\draw (2.5,1.5)node{$a$};
\draw (3.5,1.5)node{$a$};
\draw (4.5,1.5)node{$\cdots$};
\draw (5.5,1.5)node{$a$};
\draw (2.5,1)node[below]{$\eta_1$};
\draw (2.5,2)node[above]{$\xi_1$};
\draw (3.5,1)node[below]{$\eta_2$};
\draw (3.5,2)node[above]{$\xi_2$};
\draw (5.5,1)node[below]{$\eta_k$};
\draw (5.5,2)node[above]{$\xi_k$};
\draw (0.1,1.2)node{$\displaystyle\sum_{\xi_1,\xi_2,\dots,\xi_k,
\eta_1,\eta_2,\dots,\eta_k}$};
\draw (8.8,1.5)node{$\mid \xi_1\xi_2\cdots\xi_k\rangle
\langle\eta_1\eta_2\cdots\eta_k\mid$};
\draw [thick] (2,1.5) arc (90:270:0.5);
\draw [thick] (6.5,1) arc (0:90:0.5);
\draw [thick] (6,0.5) arc (270:360:0.5);
\draw [thick] (2,0.5)--(6,0.5);
\end{tikzpicture}
\caption{The matrix product operator $O_a^k$}
\label{mpoOa}
\end{center}
\end{figure}

We then have $O_a^k O_b^k=\sum_c N_{ab}^c O_c^k$.
We further define a projector matrix product operator 
$P^k=\displaystyle \sum_a\frac{d_a}{w}O_a^k$
as in \cite[Section 3.1]{BMWSHV}.  (This is a projection
as shown there.)

For a path $\xi_1\cdot\xi_2$ with $r(\xi_1)=s(\xi_2)$ and
$|\xi_1|=|\xi_2|=k$, we define
$\Phi^k(\xi_1\cdot\xi_2)=\displaystyle
\sqrt{\frac{\mu_{s(\xi_1)}}{\mu_{r(\xi_1)}}}(\xi_1,\tilde\xi_2)$, which
is a map from $\Path^{2k}(\G)$ to $B_k$, where 
$\tilde\xi_2$ is the reversed path of $\xi_2$,
$B_k=\bigoplus_x \Str_x^k(\G)$ and
$\Str_x^k(\G)$ is the string algebra on $\G$ with length
$k$ starting at a vertex $x\in V_0$ of $\G$.
(See \cite[Definitions 11.1, 11.4]{EK} for string algebras.)

We define a matrix product operator $\tilde O_{a,x}^{k,y}$
from $\Str_x^k(\G)$  to $\Str_y^k(\G)$
as in Figure \ref{tOaxy}.  

\begin{figure}[h]
\begin{center}
\begin{tikzpicture}
\draw [thick, ->] (4,1)--(7,1);
\draw [thick, ->] (10,1)--(7,1);
\draw [thick, ->] (4,2)--(7,2);
\draw [thick, ->] (10,2)--(7,2);
\draw [thick, ->] (4,2)--(4,1);
\draw [thick, ->] (4,2)--(4,1);
\draw [thick, ->] (5,2)--(5,1);
\draw [thick, ->] (6,2)--(6,1);
\draw [thick, ->] (7,2)--(7,1);
\draw [thick, ->] (8,2)--(8,1);
\draw [thick, ->] (9,2)--(9,1);
\draw [thick, ->] (10,2)--(10,1);
\draw (4.5,1.5)node{$W_a$};
\draw (5.5,1.5)node{$W'_a$};
\draw (6.5,1.5)node{$\cdots$};
\draw (7.5,1.5)node{$\cdots$};
\draw (8.5,1.5)node{$W'_a$};
\draw (9.5,1.5)node{$W_a$};
\draw (5.5,1)node[below]{$\eta_1$};
\draw (8.5,1)node[below]{$\eta_2$};
\draw (5.5,2)node[above]{$\xi_1$};
\draw (8.5,2)node[above]{$\xi_2$};
\draw (11.5,1.5)node{$(\eta_1,\eta_2)$};
\draw (4,1.5)node[left]{$\zeta$};
\draw (10,1.5)node[right]{$\zeta$};
\draw (4,2)node[above left]{$x$};
\draw (4,1)node[below left]{$y$};
\draw (10,2)node[above right]{$x$};
\draw (10,1)node[below right]{$y$};
\draw (1.5,1.3)node{$\tilde O_{a,x}^{k,y}((\xi_1,\xi_2))
=\displaystyle\sum_{\zeta,\eta_1,\eta_2}$};
\end{tikzpicture}
\caption{The operator $\tilde O_{a,x}^{k,y}$}
\label{tOaxy}
\end{center}
\end{figure}

We next define a matrix product operator $\tilde O_a^k$ by
\[
\tilde O_a^k (\bigoplus_x \xi_x)=
\bigoplus_y \sum_x \tilde O_{a,x}^{k,y} \xi_x,
\]
where $\xi_x\in \Str_x^k(\G)$.
We again have 
$\tilde O_a^k \tilde O_b^k=\sum_{c\in V} N_{ab}^c \tilde O_c^k$.
We further define a projector matrix product operator 
$\tilde P^k=\displaystyle \sum_{a\in V}\frac{d_a}{w}\tilde O_a^k$
again as in \cite[Section 3.1]{BMWSHV}.

We then have
$\Phi^k O_a^k=\tilde O_a^k \Phi^k$ because of
the normalization in Figure \ref{convention2} and 
$(\tilde P^k)^2=\tilde P^k$ for the same reason as
$(P^k)^2=P^k$.

Each $\Str_x^k(\G)$  has a standard normalized trace
$\tr_x$ as in \cite[page 554]{EK}.  We set $\tr(\sigma)=
\displaystyle \sum_{x\in V_0} \frac{\mu_x^2}{w}\tr_x(\sigma_x)$
for $\sigma=\bigoplus_{x\in V_0} \sigma_x\in 
\bigoplus_{x\in V_0}\Str_x^k(\G)$.
We let $\|\sigma\|_{\st, 2}=\sqrt{\tr(\si^*\si)}$ for 
$\si\in B_k$.

Let $C$ be the maximum of the number of $x\in V_0$, the number
of $a\in V$, $\|\tilde O_a^k\|$ over all $a\in V$
and $\displaystyle\frac{d_a \mu_x}{w \mu_y}$ over 
all $a\in V$, $x,y\in V_0$.  Here the norm
$\|\tilde O_a^k\|$ is the operator norm on $B_k$ with
respect to $\|\cdot\|_{\st,2}$.  Note that we have $C\ge 1$.

Let $K_x^n$ be the number of paths from $*$ to $x$ on $\H$
of length $2n$.  Let $\a_n=\sqrt{\sum_x (K_x^n)^2}$,
and $\kappa_x^n=K_x^n/\a_n$. By the Perron-Frobenius
theorem, we have $\kappa_x^n\to \mu_x/\sqrt w$ as $n\to\infty$
for all $x\in V_0$.

For a positive integer $n$, let $\tilde W^n\isom \sum_a L_a^n W_a$,
$\be_n=\sqrt{\sum_a (L_a^n)^2}$
and $\la_a^n=L_a^n/\be_n$.  By the Perron-Frobenius
theorem again, we have $\la_a^n\to d_a/\sqrt w$ as $n\to\infty$
for all $a\in V$.

We recall the following elementary lemma.

\begin{lemma}\label{L2}
Let $M$ be a von Neumann algebra with a normalized trace
$\tr$ and $P$ be its subalgebra.  For $\si\in M$ and
$\e<1$, if we have
$| \|\si\|_2-\|E_P(\si)\|_2| <\e \|\si\|_2$, then we have
$\|\si-E_P(\si)\|_2<\sqrt2\sqrt\e\|\si\|_2$.
\end{lemma}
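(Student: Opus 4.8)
The plan is to work in the Hilbert space $L^2(M,\tr)$ with the inner product $\lan x,y\ran=\tr(y^*x)$, and to exploit the fact that $E_P$ is the orthogonal projection onto the closed subspace $L^2(P,\tr)$. The decisive structural fact I would use is that $\si-E_P(\si)$ is orthogonal to $E_P(\si)$: since $E_P(\si)\in L^2(P)$ and $E_P$ is a self-adjoint idempotent, we have $\lan \si-E_P(\si),E_P(\si)\ran=\tr(E_P(\si)^*\si)-\tr(E_P(\si)^*E_P(\si))=0$, because $E_P(E_P(\si)^*\si)=E_P(\si)^*E_P(\si)$ and $\tr\circ E_P=\tr$. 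This is the Pythagorean theorem for the orthogonal decomposition $\si=(\si-E_P(\si))+E_P(\si)$.

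From orthogonality I get the exact identity $\|\si\|_2^2=\|\si-E_P(\si)\|_2^2+\|E_P(\si)\|_2^2$, hence
\[
\|\si-E_P(\si)\|_2^2=\|\si\|_2^2-\|E_P(\si)\|_2^2
=(\|\si\|_2-\|E_P(\si)\|_2)(\|\si\|_2+\|E_P(\si)\|_2).
\]
Now I feed in the hypothesis. Because $E_P$ is a contraction on $L^2$, we have $\|E_P(\si)\|_2\le\|\si\|_2$, so the first factor is nonnegative and equals $|\,\|\si\|_2-\|E_P(\si)\|_2\,|<\e\|\si\|_2$ by assumption; the second factor is bounded above by $2\|\si\|_2$. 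Multiplying these two bounds gives
\[
\|\si-E_P(\si)\|_2^2<\e\|\si\|_2\cdot 2\|\si\|_2=2\e\|\si\|_2^2,
\]
and taking square roots yields $\|\si-E_P(\si)\|_2<\sqrt2\sqrt\e\,\|\si\|_2$, which is exactly the claim.

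There is essentially no serious obstacle here; the whole content is the Pythagorean identity together with the contractivity $\|E_P(\si)\|_2\le\|\si\|_2$, both standard facts about the trace-preserving conditional expectation. The only point needing a word of care is justifying that the cross term vanishes, i.e. that $E_P$ really is the $L^2$-orthogonal projection and that $\tr\circ E_P=\tr$; these are built into the definition of the conditional expectation for a trace. The hypothesis $\e<1$ plays no essential role in the inequality itself (the argument goes through for any $\e\ge 0$), so I would simply note that it is only there to keep the conclusion $\sqrt2\sqrt\e$ a meaningful, small bound.
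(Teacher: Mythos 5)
Your proof is correct and follows essentially the same route as the paper: both rest on the Pythagorean identity $\|\si\|_2^2=\|E_P(\si)\|_2^2+\|\si-E_P(\si)\|_2^2$ for the trace-preserving conditional expectation, combined with the hypothesis (the paper squares the bound $\|E_P(\si)\|_2>(1-\e)\|\si\|_2$, while you factor the difference of squares --- a cosmetic difference). Your added care in justifying orthogonality and contractivity, and the observation that $\e<1$ is inessential, are fine but do not change the argument.
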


\begin{proof}
Since $\|E_P(\si)\|_2>(1-\e)\|\si\|_2$ and 
$\|\si\|_2^2=\|E_P(\si)\|^2+\|\si-E_P(\si)\|^2$,
we have the conclusion.
\end{proof}

With these preparations, we are going to prove the following
main result of this paper.

\begin{theorem}\label{MT}
The range of the projector matrix product operator $P^k$ of length
$k$ is naturally identified with the $k$th higher relative commutant
$A'_{\infty,0}\cap A_{\infty,k}$ for the subfactor
$A_{\infty,0}\subset A_{\infty,1}$ arising from the original 
connection $W$.
\end{theorem}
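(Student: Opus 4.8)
The plan is to transport the whole problem from the loop space onto the string algebra $B_k$ via the intertwiner $\Phi^k$, to describe the range of $\tilde P^k$ as a joint eigenspace of the transport operators, and finally to match that eigenspace with the flat fields constituting the higher relative commutant. First I would observe that $\Phi^k$ restricts to a linear bijection from $\bigoplus_{x\in V_0}\Path^{2k}_{x,x}(\G)$ onto $B_k$: the scalar $\sqrt{\mu_{s(\xi_1)}/\mu_{r(\xi_1)}}$ is nonzero, and $\xi_1\cdot\xi_2\mapsto(\xi_1,\tilde\xi_2)$ is a bijection between length-$2k$ loops and pairs of length-$k$ strings with common endpoints. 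Because $\Phi^k O_a^k=\tilde O_a^k\Phi^k$ for every $a$ (the normalization of Figure \ref{convention2}), we get $\Phi^k P^k=\tilde P^k\Phi^k$, so $\Phi^k$ carries the range of $P^k$ onto the range of $\tilde P^k$; it is moreover compatible with the trace structure, so the identification is isometric for $\|\cdot\|_{\st,2}$. It therefore suffices to identify the range of $\tilde P^k$ inside $B_k$.

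Next I would give a purely algebraic description of that range. From $\tilde O_a^k\tilde O_b^k=\sum_c N_{ab}^c\tilde O_c^k$ together with the character identity $\sum_b d_b N_{ab}^c=d_a d_c$ (Frobenius reciprocity and $d_{\bar a}=d_a$), a short computation gives $\tilde O_a^k\tilde P^k=d_a\tilde P^k$ for all $a$. Hence the range of $\tilde P^k$ equals the joint eigenspace $\{\sigma\in B_k:\tilde O_a^k\sigma=d_a\sigma\ \text{for all }a\in V\}$: any $\sigma=\tilde P^k\tau$ satisfies $\tilde O_a^k\sigma=d_a\sigma$, and conversely such a $\sigma$ has $\tilde P^k\sigma=\sum_a\tfrac{d_a^2}{w}\sigma=\sigma$ since $w=\sum_a d_a^2$.

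It then remains to match this eigenspace with the relative commutant. I would realize $A'_{\infty,0}\cap A_{\infty,k}$ as the space of flat fields of strings of length $k$ on $\G$, in the sense of \cite[Theorem 11.15]{EK}, the normalization $\sum_{x\in V_0}\mu_x^2=w$ making this an isometric identification for $\tr$. The easy inclusion is that a flat field is such an eigenvector: transporting a flat field through the connection $W_a$ and closing the horizontal loop as in Figure \ref{mpoOa} multiplies it by the Perron-Frobenius eigenvalue $d_a$ of the vertical graph of $W_a$, which follows from $\sum_{y}M_{xa}^y\mu_y=d_a\mu_x$ and the unitarity and renormalization axioms. This shows that the relative commutant is contained in the range of $\tilde P^k$.

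The main obstacle is the reverse inclusion: that every joint $d_a$-eigenvector is genuinely flat, so that nothing extra enters the range. Here I would use Lemma \ref{L1} to write the trace-preserving conditional expectation $E_n$ onto the finite-stage relative commutant $A'_{n,0}\cap A_{n,k}$, whose averaging weights are the path counts $1/K_x^n$; by the Perron-Frobenius convergence $\kappa_x^n\to\mu_x/\sqrt w$ and $\la_a^n\to d_a/\sqrt w$, the $n$-step transport operator built from $\tilde W^n\isom\sum_a L_a^n W_a$, suitably normalized, converges to $\tilde P^k$. Consequently, on an eigenvector $\sigma$ one obtains $\|E_n\sigma\|_{\st,2}\to\|\sigma\|_{\st,2}$, and Lemma \ref{L2} upgrades this to $\|\sigma-E_n\sigma\|_{\st,2}\to0$, forcing $\sigma$ into $A'_{\infty,0}\cap A_{\infty,k}$, which by the finite-depth hypothesis is already attained at a finite stage. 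The delicate point throughout is precisely the reconciliation of the combinatorial normalizations $K_x^n$ and $L_a^n$ with the Perron-Frobenius weights $\mu_x$ and $d_a$ entering $\tilde P^k$ and the trace on $B_k$; controlling this limit, rather than any single algebraic identity, is where the real work lies.
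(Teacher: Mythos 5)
Your first three paragraphs are sound and essentially coincide with the paper's argument: the transport by $\Phi^k$, and the fact that a flat field weighted by the Perron--Frobenius entries satisfies $\tilde O_a^k(\bigoplus_x\mu_x\sigma_x)=d_a\bigoplus_x\mu_x\sigma_x$, are exactly how the paper builds its injection $\Delta$ from $A'_{\infty,0}\cap A_{\infty,k}$ into the range of $\tilde P^k$; your characterization of that range as the joint eigenspace $\{\sigma\in B_k:\tilde O_a^k\sigma=d_a\sigma\ \text{for all}\ a\in V\}$, via $\sum_b d_bN_{ab}^c=d_ad_c$, is a clean reformulation that the paper leaves implicit. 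The genuine gap is in the converse direction. The expression $E_n\sigma$ is undefined until you realize $\sigma\in B_k$ inside the tower, and there is no canonical way to do this: one must choose the stage-$n$ diagonal embedding $\sigma^{(n)}=\sum_x\sum_\xi(\xi,\xi)\cdot\sigma_x\in A'_{2n,0}\cap A_{2n,k}$, and these embeddings are compatible with the inclusions $A_{2n,k}\subset A_{2n+2m,k}$ precisely when the field $(\sigma_x)$ is flat --- the very thing to be proved. What Lemma \ref{L1} and the estimates (\ref{gamma}), (\ref{kappa}), (\ref{lambda}) actually give is $E_{A'_{2n+2m,0}\cap A_{2n+2m,k}}(\sigma^{(n)})\approx\sigma^{(n+m)}$: the expectation \emph{slides} the embedding from stage $n$ to stage $n+m$. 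With Lemma \ref{L2} this yields $\|\sigma^{(n)}-\sigma^{(n+m)}\|_2\le 10C^5\sqrt{\varepsilon_n}\,\|\sigma\|_{\st,2}$, where $\varepsilon_n$ is the Perron--Frobenius error at stage $n$; for \emph{fixed} $n$ this bound does not tend to $0$ as $m\to\infty$. Indeed, since the algebras $A'_{2n+2m,0}\cap A_{\infty,k}$ decrease to $A'_{\infty,0}\cap A_{\infty,k}$, the quantity $\|\sigma^{(n)}-E_{A'_{2n+2m,0}\cap A_{2n+2m,k}}(\sigma^{(n)})\|_2$ converges as $m\to\infty$ to the $\|\cdot\|_2$-distance from $\sigma^{(n)}$ to $A'_{\infty,0}\cap A_{\infty,k}$, which vanishes if and only if the field is already flat; so your claim $\|\sigma-E_n\sigma\|_{\st,2}\to0$ is a restatement of the conclusion, not a consequence of the estimates. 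The closing appeal to finite depth is also incorrect: the finite-stage commutants $A'_{2n,0}\cap A_{2n,k}$ are all isomorphic to $B_k$, whose dimension generically strictly exceeds that of $A'_{\infty,0}\cap A_{\infty,k}$ (while $A'_{2n,0}\cap A_{\infty,k}$ is even infinite dimensional), so nothing is ``attained at a finite stage''; finite depth enters only through the finiteness of $V$ and the Perron--Frobenius convergence.

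The paper closes this gap without ever showing that a fixed $\sigma^{(n)}$ is flat. It chooses stages $n_l$ inductively so that the errors are summable (taking $\varepsilon=1/(100C^{10}\cdot4^l)$), obtains $\|\sigma^{(n_l)}-\sigma^{(n_{l+1})}\|_2\le2^{-l}\|\sigma\|_{\st,2}$, and takes the limit $\Gamma(\bigoplus_x\mu_x\sigma_x)=\lim_l\sigma^{(n_l)}$, which lies in $A'_{\infty,0}\cap A_{\infty,k}$ because each $\sigma^{(n_l)}$ commutes with $A_{2n_l,0}$; then $\Gamma$ is isometric, hence injective, and the theorem follows by comparing dimensions against the injection $\Delta$ of the easy direction. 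Only a posteriori does one learn that the field was flat and that all $\sigma^{(n)}$ coincide. Your quantitative estimates could be salvaged in a similar spirit --- they show every unit vector of the range lies within a uniformly small $\|\cdot\|_2$-distance of $A'_{\infty,0}\cap A_{\infty,k}$, so the orthogonal projection onto that finite dimensional subspace is injective on the range, giving the same dimension inequality --- but some such limiting or counting argument is indispensable and is missing from your sketch.
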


\begin{proof}
Note that the map $\Phi^k$ gives a linear isomorphism from
the range of $P^k$ to that of $\tilde P^k$ in $B_k$.

We first construct a linear isomorphism $\Delta$ from
$A'_{\infty,0}\cap A_{\infty,k}$ to the range of 
$\tilde P^k$.  By \cite[Theorem 11.15]{EK}, an arbitrary element in
$A'_{\infty,0}\cap A_{\infty,k}$  is given by a flat
field $\bigoplus_x \si_x \in B_k$ and identified with
$\si_* \in A_{0,k}$.

We define an operator $T_{a,x,\zeta_1,\zeta_2}^{k,y}$
from $\Str_x^k(\G)$ to $\Str_y^k(\G)$ as in
Figure \ref{Taxy}.

\begin{figure}[h]
\begin{center}
\begin{tikzpicture}
\draw [thick, ->] (4,1)--(7,1);
\draw [thick, ->] (10,1)--(7,1);
\draw [thick, ->] (4,2)--(7,2);
\draw [thick, ->] (10,2)--(7,2);
\draw [thick, ->] (4,2)--(4,1);
\draw [thick, ->] (4,2)--(4,1);
\draw [thick, ->] (5,2)--(5,1);
\draw [thick, ->] (6,2)--(6,1);
\draw [thick, ->] (7,2)--(7,1);
\draw [thick, ->] (8,2)--(8,1);
\draw [thick, ->] (9,2)--(9,1);
\draw [thick, ->] (10,2)--(10,1);
\draw (4.5,1.5)node{$W_a$};
\draw (5.5,1.5)node{$W'_a$};
\draw (6.5,1.5)node{$\cdots$};
\draw (7.5,1.5)node{$\cdots$};
\draw (8.5,1.5)node{$W'_a$};
\draw (9.5,1.5)node{$W_a$};
\draw (5.5,1)node[below]{$\eta_1$};
\draw (8.5,1)node[below]{$\eta_2$};
\draw (5.5,2)node[above]{$\xi_1$};
\draw (8.5,2)node[above]{$\xi_2$};
\draw (11.5,1.5)node{$(\eta_1,\eta_2)$};
\draw (4,1.5)node[left]{$\zeta_1$};
\draw (10,1.5)node[right]{$\zeta_2$};
\draw (4,2)node[above left]{$x$};
\draw (4,1)node[below left]{$y$};
\draw (10,2)node[above right]{$x$};
\draw (10,1)node[below right]{$y$};
\draw (1.2,1.3)node{$T_{a,x,\zeta_1,\zeta_2}^{k,y}((\xi_1,\xi_2))
=\displaystyle\sum_{\eta_1,\eta_2}$};
\end{tikzpicture}
\caption{The operator $T_{a,x,\zeta_1,\zeta_2}^{k,y}$}
\label{Taxy}
\end{center}
\end{figure}

Then flatness of the field \cite[Theorems 11.15]{EK}
gives the equality
$T_{a,x,\zeta_1,\zeta_2}^{k,y}(\si_x)=
\delta_{\zeta_1,\zeta_2}\si_y$.
(This holds as in \cite[Figure 11.19]{EK}.  Though flatness
of the bi-unitary connection is not assumed here, flatness
of the fields works instead.)
This implies that
$\tilde O_a^k \si_x= \bigoplus_y M_{xa}^y \si_y$.
Note we have 
\[
\sum_{x\in V_0,a\in V} d_a \mu_x M_{xa}^y=
\sum_{a\in V} d_a \sum_{x\in V_0} \mu_x M_{y\bar a}^x=
\sum_{a\in V} d_a^2 \mu_y =w\mu_y.
\]
We then have 
\[
\tilde P^k(\bigoplus_{x\in V_0} \mu_x \si_x)=
\sum_{x\in V_0}\frac{d_a}{w}\tilde O_a^k \mu_x \si_x=
\bigoplus_{y\in V_0} \sum_{x\in V_0,a\in V}
\frac{d_a}{w} \mu_x M_{xa}^y \si_y
=\bigoplus_{y\in V_0} \mu_y \si_y
\] 
so the map $\Delta$ assigning
$\bigoplus_{x\in V_0} \mu_x \si_x$ to $\si_*$ gives a linear injection
from $A'_{\infty,0}\cap A_{\infty,k}$ 
to the range of $\tilde P^k$ in $B_k$.

We next construct an injective linear map for the
converse direction.  Let 
$\bigoplus_{x\in V_0} \mu_x \si_x$ be in
the range of $\tilde P^k$ in $B_k$.
For a positive integer $n$, we set 
\[
\si^{(n)}=\sum_{x\in V_0} \sum_\xi (\xi,\xi)\cdot\si_x
\in A_{2n,k},
\]
where $\xi$ gives all paths from $*$ to $x$ on $\H$
with length $2n$.  We assume that $n$ is sufficiently large 
so that the numbers $K_x^n$ are all nonzero.

Suppose that we have the following three estimates for sufficiently
small $\e>0$.

\begin{equation}\label{gamma}
\frac{1-\e}{\sqrt w} < \alpha_p \gamma_2^{-2p}
<\frac{1+\e}{\sqrt w},\quad
\hbox{for all $p\ge n$}
\end{equation}

\begin{equation}\label{kappa}
(1-\e)\frac{\mu_x}{\sqrt w} < \kappa_x^n
<(1+\e)\frac{\mu_x}{\sqrt w},\quad
\hbox{for all $x\in V_0$}
\end{equation}

\begin{equation}\label{lambda}
(1-\e)\frac{d_a}{\sqrt w} < \la_a^m
<(1+\e)\frac{d_a}{\sqrt w},\quad
\hbox{for all $V\in a$}
\end{equation}

A computation shows that
$E_{A'_{2n+2m,0}\cap A_{2n+2m,k}}(\si^{(n)})$ is equal to
\[
\sum_{y\in V_0} \sum_{\xi} (\xi,\xi)\cdot 
\sum_{a\in V,x\in V_0}\frac{K_x^n}{K_y^{n+m}}
L_a^m \tilde O_{a,x}^{k,y}(\si_x)
\] 
by Lemma \ref{L1}, whrere $\xi$ 
gives all paths from $*$ to $y$ on $\H$ with length $2(n+m)$,
since we have $\tilde W\isom\sum_{a\in V} L_a^m W_a$.
Here for large $n$ and $m$, $K_x^n$ is almost equal to
$\displaystyle\frac{\a_n\mu_x}{\sqrt w}$,
$L_a^m$ is almost equal to
$\displaystyle\frac{\be_m d_a}{\sqrt w}$,
and $K_y^{n+m}$ is almost equal to
\begin{align*}
\sum_{a\in V,x\in V_0}\frac{\a_n\mu_x}{\sqrt w}
\frac{\be_m d_a}{\sqrt w}M_{xa}^y
&=\frac{\a_n\be_m}{w}\sum_{a\in V,x\in V_0}d_a \mu_x M_{y\bar a}^x\\
&=\frac{\a_n\be_m}{w}\sum_{a\in V} d_a^2 \mu_y\\
&=\a_n\be_m\mu_y.
\end{align*}
If we had exact equalities for all these three pairs, then we would have
\begin{align*}
\bigoplus_{y\in V_0} \sum_{a\in V,x\in V_0}\frac{K_x^n}{K_y^{n+m}}
L_a^m \tilde O_{a,x}^{k,y}(\si_x)
&=\bigoplus_{y\in V_0} \sum_{a\in V, x\in V_0}
\frac{\mu_x}{\sqrt w \be_m \mu_y}\frac{\be_m d_a}{\sqrt w}
\tilde O_{a,x}^{k,y}(\si_x)\\
&=\bigoplus_{y\in V_0} \sum_{a\in V, x\in V_0} 
\frac{d_a \mu_x}{w\mu_y}\tilde O_{a,x}^{k,y}(\si_x)\\
&=\bigoplus_{y\in V_0} \si_y,
\end{align*}
where the last equality would follow from
\[
\sum_{a\in V}\frac{d_a}{w}\tilde O_a^k
 (\bigoplus_{x\in V_0} \mu_x \sigma_x)
=\tilde P_k(\bigoplus_{x\in V_0} \mu_x \sigma_x)
=\bigoplus_{y\in V_0} \mu_y \sigma_y,
\]
so $E_{A'_{2n+2m,0}\cap A_{2n+2m,k}}(\si^{(n)})$ would be equal
to $\si^{(n+m)}$.  Now we take the approximation errors into account.
Suppose we have the estimates (\ref{kappa}) and (\ref{lambda}).
We then have
\[
(1-\e)^2\a_n\be_m\mu_y < K_y^{n+m} 
<(1+\e)^2\a_n\be_m\mu_y,\quad
\hbox{for all $y$}
\]
and then 
\[
\frac{1-\e}{(1+\e)^2}
\frac{\mu_x}{\sqrt w \be_m \mu_y} < \frac{K_x^n}{K_y^{n+m}} 
<\frac{1+\e}{(1-\e)^2}\frac{\mu_x}{\sqrt w \be_m \mu_y},\quad
\hbox{for all $x,y$}.
\]
We further have
\[
\frac{(1-\e)^2}{(1+\e)^2}\frac{\mu_x d_a}{\mu_y w} <
\frac{K_x^n}{K_y^{n+m}} L_a^m <
\frac{(1+\e)^2}{(1-\e)^2}\frac{\mu_x d_a}{\mu_y w},
\]
which means
\[
(1-5\e)\frac{\mu_x d_a}{\mu_y w} <
\frac{K_x^n}{K_y^{n+m}} L_a^m <
(1+5\e)\frac{\mu_x d_a}{\mu_y w},
\]
This shows 
\[
\|E_{A'_{2n+2m,0}\cap A_{2n+2m,k}}(\si^{(n)})-\si^{(n+m)}\|_2
\le 5C^5\e \|\si^{(n+m)}\|_2
\le 6C^5\e \|\si\|_{\st,2}.
\]
This is because we have
\[
(1-\e)\|\si\|_{\st,2}
<\|\si^{(n)}\|_2
<(1+\e) \|\si\|_{\st,2}
\]
and
\[
(1-\e)\|\si\|_{\st,2}
<\|\si^{(n+m)}\|_2
<(1+\e) \|\si\|_{\st,2},
\]
since the trace value of the minimal central projection
corresponding to the vertex $x$ in $A_{2n,0}$ is equal to
$\alpha_n \kappa_n^x \gamma_2^{-2n} \mu_x$ while
the trace value of the central projection 
corresponding to the vertex $x$ in 
$\bigoplus_{x\in V_0} \Str_x^k(\G)$ is 
$\displaystyle\frac{\mu_x^2}{w}$ and we have
(\ref{gamma}) and (\ref{kappa}).  We now have
\[
|\|E_{A'_{2n+2m,0}\cap A_{2n+2m,k}}(\si^{(n)})\|_2-\|\si^{(n)}\|_2|
< 8C^5 \e \|\si\|_{\st,2}.
\]
By Lemma \ref{L2}, we then have
\[
\|E_{A'_{2n+2m,0}\cap A_{2n+2m,k}}(\si^{(n)})-\si^{(n)}\|_2
< 4C^3 \sqrt\e \|\si\|_{\st,2}.
\]
We now have
\[
\|\si^{(n+m)}-\si^{(n)}\|_2\le
10C^5 \sqrt\e \|\si\|_{\st,2}.
\]

We first choose $n_1$ so that we have
(\ref{gamma}) and (\ref{kappa}) with $n=n_1$ and 
$\e=\displaystyle\frac{1}{100 C^{10}\cdot 4}$.
Starting with $l=1$, we make the following
procedure inductively.
We choose $m_l$ so that we have
(\ref{lambda}) with $m=m_l$ and 
$\e=\displaystyle\frac{1}{100 C^{10}\cdot 4^l}$
and (\ref{kappa}) and (\ref{kappa}) with $n=n_l+m_l$ and 
$\e=\displaystyle\frac{1}{100 C^{10}\cdot 4^{l+1}}$.
(Note that $\alpha_p \gamma_2^{-2p}\to 
\displaystyle\frac{1}{\sqrt w}$ as
$p\to\infty$ because we have
$\alpha_p \kappa_p^x \gamma_2^{-2p}\mu_x\to
\displaystyle\frac{\mu_x^2}{w}$ and
$\kappa_p^x\to \displaystyle\frac{\mu_x}{\sqrt w}$
as $p\to\infty$.)
We next set $n_{l+1}=n_l+m_l$.

Then we have
\[
\|\si^{(n_l)}-\si^{(n_{l+1})}\|_2\le
\frac{1}{2^l}\|\si\|_{\st,2}
\]
for all $\sigma$.  Because of this estimate, we know that the sequence
$\{\si^{(n_l)}\}_l$ converges in $A_{\infty,k}$ in the
strong operator topology for all $\sigma$.
We set $\Gamma(\bigoplus_{x\in V_0} \mu_x \si_x)
=\lim_{l\to\infty}\si^{(n_l)}.$  
Since $\si^{(n_l)}\in A'_{n_l,0}\cap A_{\infty,k}$,
we have $\Gamma(\bigoplus_{x\in V_0} \mu_x \si_x)\in
A'_{\infty,0}\cap A_{\infty,k}$.
This $\Gamma$ is clearly a linear map.  
We have $\|\Gamma(\bigoplus_{x\in V_0} \mu_x \si_x)\|_2
=\|\bigoplus_{x\in V_0} \si_x\|_{\st,2}$, so $\Gamma$
is injective.  This shows 
the dimension of the range of $\tilde P^k$ is
smaller than or equal to 
$\dim (A'_{\infty,0}\cap A_{\infty,k})$.
We thus conclude that the map $\Delta$ constructed
above is a linear isomorphism.
(This actually shows that $\bigoplus_{x\in V_0} \si_x$ is a flat
field and all $\sigma^{(n)}$ are equal in $A_{\infty,k}$.)
\end{proof}

\begin{remark}{\rm
The range of the projector matrix product operator of
length $k$ has obvious invariance under  rotation of
$2\pi/k$.  This passes to invariance of flat fields
of strings of length $k$ under rotation of $2\pi/k$.
Such invariance was observed by Ocneanu in early days
of the theory and this rotation was called a Fourier
transform of a flat field of strings.
See \cite{Li} for a recent progress of this notion of
the Fourier transform.
}\end{remark}

\begin{remark}{\rm
Replace the initial bi-unitary connection $W$ with $W'$.
The resulting subfactor $A_{0,\infty}\subset A_{1,\infty}$ does
not change, so the set $\{a,b,\dots\}$ of labels of the
irreducible bi-unitary connections does not change, but the
subfactor $A_{\infty,0}\subset A_{\infty,1}$ changes to its
dual subfactor.  So the range of the projector matrix product
operator also changes from the higher relative commutant to
the dual higher relative commutant of a subfactor in this process.
}\end{remark}

\begin{remark}{\rm
Recall that the Drinfel$'$d center of the fusion category of
$A_{0,\infty}$-$A_{0,\infty}$ bimodules arising from
the subfactor $A_{0,\infty}\subset A_{1,\infty}$ is a modular tensor
category related to the 2-dimensional topological order
appearing in \cite[Section 5]{BMWSHV}, as shown in \cite[Theorem 3.2]{K3}.
Note that the higher relative commutants of the {\em other}
subfactor $A_{\infty,0}\subset A_{\infty,1}$ appear here in
this paper.  Relations between these two subfactors are
clarified in \cite[Theorem 3.3]{S2}.
}\end{remark}

\begin{remark}{\rm
The range of  $P^k$ does not have  a natural algebra structure, but
we know from the above Theorem that it has a natural structure of
a $*$-algebra.
}\end{remark}

\begin{example}{\rm
An almost trivial example is given as follows.
All the sets $V_0,V_1,V_2,V_3$ are one-point sets and identified
with $\{x\}$.  All the graphs $\G,\G',\H,\H'$ consist of
$d$ multiple edges from $x$ to $x$ and they are all identified.
We have $\mu_x=1$, $\gamma_1=\gamma_2=d$ and
the connection $W$ is given as in Figure \ref{trivial}.

\begin{figure}[h]
\begin{center}
\begin{tikzpicture}
\draw [thick, ->] (1,1)--(2,1);
\draw [thick, ->] (1,2)--(2,2);
\draw [thick, ->] (1,2)--(1,1);
\draw [thick, ->] (2,2)--(2,1);
\draw (1.5,1.5)node{$W$};
\draw (1,1.5)node[left]{$\xi_0$};
\draw (2,1.5)node[right]{$\xi_2$};
\draw (1.5,1)node[below]{$\xi_1$};
\draw (1.5,2)node[above]{$\xi_3$};
\draw (3.7,1.5)node{$=\delta_{\xi_0,\xi_2}\delta_{\xi_1,\xi_3}.$};
\end{tikzpicture}
\caption{An almost trivial example}
\label{trivial}
\end{center}
\end{figure}

This is a flat connection, and ths set $V$ is identified with
$\{x\}$.  We have $d_x=1$ and $w=1$.  

In this case, the range of $\tilde P^k$ is $M_d(\C)^{\otimes k}$,
where $M_d(\C)$ is the $d\times d$ full matrix algebra with
complex entries.

In this example, the natural $C^*$-algebra appearing in
the inductive limit is a UHF algebra.  In the general case,
we have an AF algebra instead.
}\end{example}

\begin{example}{\rm
An easy example of a bi-unitary connection arises from a finite
group $G$ as in \cite[Figure 10.25]{EK}.  This corresponds to
a trivial 3-cocycle case considered in \cite[Section 6]{BMWSHV}.

We have $\gamma_1=\gamma_2=\sqrt{|G|}$ and
this is a flat connection.  The sets $V_0$ and $V$ are both
identified with $G$ as sets. All $d_a$ are 1 and $w=|G|$.
}\end{example}

\begin{example}{\rm
Consider the example arising from the Dynkin diagram $A_n$
as in Example \ref{dynkin}.
This is a flat connection and the set $V$ is identified
with $V_0$ which consists of $[(n+1)/2]$ vertices.  The value
$w$ is equal to $\displaystyle\frac{n+1}{4\sin^2\frac{\pi}{n+1}}$
\cite{Ch}.  In this case, the range of $\tilde P^k$ is generated
by the Jones projections $e_1,e_2,\dots,e_{k-1}$, where the
Jones projections are given as in \cite[Definition 11.5]{EK}.
}\end{example}

\begin{remark}\label{R1}{\rm
Since our treatments include non-flat bi-unitary connections, our
setting looks more general than that in \cite{LFHSV}.  That is,
it seems our 4-tensors give a larger class than those covered in
\cite{LFHSV}, but 
exact relations between ours and theirs are not clearly understood.
It would be interesting to clarify this issue.
}\end{remark}

\end{document}